\documentclass{amsart}

\usepackage{amsfonts}
\usepackage{amsmath}
\usepackage{graphicx}
\usepackage{color}
\usepackage{tikz,subcaption}
\usepackage{biblatex}
\newtheorem{theorem}{Theorem}
\newtheorem{prop}{Proposition}

\newtheorem{lemma}{Lemma}
\newtheorem{defn}{Definition}
\newtheorem{cor}{Corollary}
\newtheorem*{gccc}{The Generalized Cosmetic Crossing Conjecture}
\newtheorem*{main}{Theorem 1}

\title{The $n$-adjacency graph for knots}

\author{Marion Campisi}
\address{Marion Campisi
\newline San Jos\'{e} State University
\newline San Jos\'{e}, CA
\newline USA}
\email{marion.campisi@sjsu.edu}
\author{Brandy Doleshal}
\address{Brandy Doleshal
\newline Sam Houston State University
\newline Huntsville, TX
\newline USA}
\email{bdoleshal@shsu.edu}
\author{Eric Staron}
\address{Eric Staron
\newline The University of Texas at Austin
\newline Austin, TX
\newline USA}
\email{estaron@math.utexas.edu}

\begin{document}

\begin{abstract}
A knot $K$ is called $n$-adjacent to a knot $K'$ if there is a set of $n$ crossing circles $\mathcal C$ in $K$ so that a generalized crossing change at any nonempty subset of crossings in $\mathcal C$ yields $K'$.
In this paper, the authors define a new graph $\Gamma_n$ to represent $n$-adjacency relationships between knots. We prove several results about this new object.
\end{abstract}

\maketitle

\section{Introduction}

A crossing change is a $\pm \frac{1}{2}$-Dehn surgery along a so called \emph{crossing circle}, i.e. a circle which bounds a \emph{crossing disk} which intersects a knot in two points, with algebraic intersection number zero. A generalized crossing change is a $\pm \frac{1}{2q}$-Dehn surgery along a crossing circle.
In this work, we say a knot $K$ is called $n$-adjacent to a knot $K'$ if there is a set of $n$ crossing circles $\mathcal C$ in $K$ so that a generalized crossing change at any nonempty subset of crossings in $\mathcal C$ yields $K'$. Where relevant, we make a distinction between $n$-adjacency resulting from crossing changes and $n$-adjacency resulting from generalized crossing changes by specifying that $q= 1$ in the former case.

Many researchers have studied $n$-adjacency in various contexts. Howards and Luecke \cite{HLntrivial} provide, when $q=1$, an upper bound for $n$ when a knot $K$ is $n$-adjacent to the unknot, in terms of the Seifert genus of $K$. Kalfagianni and Lin \cite{KalfLinFibering} \cite{KalfLinGenus} generalize these results when $g(K) > g(K')$ and use knot adjacency to obstruct fiberings of knots. Torisu \cite{TorisuTwoBridge2016} \cite{TorisuTwoBridge2008}  provides results about which knots are 2-adjacent to 2-bridge knots. Askitas and Kalfagianni \cite{AKOnKnotAdjacency} use the Alexander polynomial to obstruct fibered or alternating knots from being $n$-adjacent to the unknot for $n\ge 3$, when $q=1$. Tao \cite{TaoSymmetry} considers the relationship between 2-adjacency and knot polynomials when $q=1$, giving a condition under which the 2-adjacency relation is not symmetric. Most recently, Carney and Meike \cite{CM2AdjacentToUnknot} show that only 20 knots of 12 or fewer crossings are 2-adjacent to the unknot when $q=1$.

We contribute to this effort by defining a new object, called the $n$-adjacency graph, to represent the $n$-adjacency relation between knots, using generalized crossing changes. We prove several results about this object.

In Section \ref{section:background}, we provide the necessary background and definitions. In Section \ref{section:thegraph}, we define the $n$-adjacency graph for knots and prove several results about this new object, providing some connections to pre-existing literature. In Section \ref{section:2bridge}, we focus specifically on 2-adjacency of 2-bridge knots, and discuss a family of examples that prove the following:

\begin{theorem}\label{thm:main}
For every 2-bridge knot $K$, there are infinitely many 2-bridge knots $K'$ such that $K' \xrightarrow{\ 2\ } K$.
\end{theorem}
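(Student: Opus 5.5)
Here $K' \xrightarrow{\ 2\ } K$ means $K'$ is 2-adjacent to $K$. So I need, for each 2-bridge knot $K$, infinitely many 2-bridge knots $K'$, each carrying two crossing circles $C_1, C_2$ such that a $\pm\frac{1}{2q}$ surgery on $C_1$ alone, on $C_2$ alone, or on both yields $K$.

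I will work with Conway's normal form $C(a_1,a_2,\dots,a_k)$. Write $K=C(a_1,\dots,a_k)$, where the $a_i$ count half-twists in the rational tangle decomposition. A twist box containing an even number $2m$ of crossings carries a natural crossing circle $C$ encircling the two strands of the box. Those strands are oppositely oriented or not depending on orientations, but with algebraic intersection zero when the strands are antiparallel. A $\frac{1}{2q}$-type twist along $C$ changes $2m$ to $2m\pm 2q$ (the sign depending on convention).

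The construction is to insert an extra block into the continued fraction of $K$. A natural choice is
\[
K'_{m}=C(a_1,\dots,a_k,\,2m,\,c,\,-2m),
\]
or an analogous insertion in the middle of the word, with $c$ chosen so the tail contributes trivially. The two crossing circles encircle the two twist boxes $2m$ and $-2m$. Twisting either box to $0$ (i.e.\ $q=m$) collapses the fraction via the identities $C(\dots,x,0,y,\dots)=C(\dots,x+y,\dots)$ and $C(\dots,x,0)=C(\dots)$. I want both single collapses and the double collapse to reduce to $C(a_1,\dots,a_k)$, possibly up to mirror or reversal of the word, which give the same knot.

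A cleaner option may be $K'=C(2m,\,a_1,\dots,a_k,\,-2m)$ with a suitable attachment. The most robust version I would try first uses the fact that a twist of $\pm 2m$ at the end of the word is absorbed: $C(2m,0,\dots)$ reduces. Adding the crossing circles around two 0-tangle pieces produces twist regions, and undoing either twist region, or both, returns $K$. Concretely, I would take $K'$ to be $K$ with two parallel twist regions, of $2m$ and $-2m$ crossings, inserted into the same rational tangle. The two insertions cancel algebraically, so undoing both gives $K$, and the construction should be arranged so that undoing just one also gives $K$. This is the main obstacle: arranging that a single twist removal still yields $K$, not merely the double removal. For this I would use the flype/continued-fraction computation
\[
[\dots, x, 2m, 0, -2m, y,\dots]
\]
and choose the placement so that each single removal produces an adjacent pair $(\pm 2m, 0)$ at the end, or a zero entry that merges terms back to the original.

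Once a valid family is found, the remaining steps are as follows.

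First, check that the crossing disks meet $K'$ in antiparallel strands, so the surgery really is a generalized crossing change. I would choose the parities of the $a_i$ appropriately, or use the freedom to move even entries, since every 2-bridge knot has a continued fraction with all entries even.

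Second, show that the $K'_m$ are pairwise distinct 2-bridge knots. Each $K'_m$ is rational by construction. I would compute its fraction $p_m/q_m$ and show that $p_m=\det K'_m$ grows with $m$ (for instance, it is a nonconstant polynomial in $m$), so there are infinitely many distinct knots by Schubert's classification.

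Throughout, I would use the even continued fraction form $C(2b_1,\dots,2b_k)$ to guarantee that the orientation condition holds uniformly.
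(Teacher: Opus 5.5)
The gap is the step you flag yourself as the main obstacle. You never exhibit a family in which each \emph{single} box removal returns $K$, and every concrete candidate you write down fails.

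For $C(a_1,\dots,a_k,2m,c,-2m)$, removing the $-2m$ box gives $C(a_1,\dots,a_k,2m,c,0)=C(a_1,\dots,a_k,2m)$. Its determinant is $|2mp_k+p_{k-1}|$, with $p_j$ the convergent numerators, and this differs from $\det K=p_k$ once $|m|$ is large, whatever $c$ is. For $C(2m,a_1,\dots,a_k,-2m)$, removing both boxes gives $C(0,a_1,\dots,a_k,0)=C(a_2,\dots,a_{k-1})$, which is not $K$. Finally, $[\dots,x,2m,0,-2m,y,\dots]$ collapses by your own merging identity to $[\dots,x+y,\dots]$ for every $m$. So it is $K$ itself rather than an infinite family, and removing only one box leaves $[\dots,x,\pm 2m,y,\dots]$, which is not $K$.

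Boxes that merely cancel each other cannot work. What you need is that deleting \emph{either} box makes everything between it and one end of the word cancel. Then the surviving box is pushed against the closure and unwinds there.

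The paper achieves this by conjugation. It writes $K$ as the 2-bridge closure of an odd-length braid $\beta$ ending in $\sigma_1$, and takes the closure of $\beta\sigma_2^{m}\beta^{-1}\sigma_2^{n}\beta$ with $m,n$ even and nonzero.
\begin{itemize}
\item Deleting $\sigma_2^m$ leaves $\sigma_2^n\beta$; here $\sigma_2^n$ twists two strands joined by the top cap, so it unwinds.
\item Deleting $\sigma_2^n$ leaves $\beta\sigma_2^m$, which unwinds against the bottom cap.
\item Deleting both leaves $\beta$.
\end{itemize}
In your notation, with $p/q=[a_1,\dots,a_k]$, the same construction is $[a_1,\dots,a_k,2m,-a_k,\dots,-a_1,2n,a_1,\dots,a_k]$. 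Your identities telescope the three surgeries to $[0,2n,a_1,\dots,a_k]=p/(2np+q)$, to $[a_1,\dots,a_k,2m,0]=[a_1,\dots,a_k]$, and to $[a_1,\dots,a_k]$. All three represent $K=S(p,q)$.

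The conjugation also settles orientations. On one side of each box the braid ($\beta^{-1}\sigma_2^n\beta$, resp.\ $\beta\sigma_2^m\beta^{-1}$) has trivial permutation. So the two strands through the box are joined through the cap at that end, and hence are antiparallel.

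Your distinctness step then works for this family. A $2\times 2$ matrix computation gives determinant $p\,|1\pm 4mnp^2|$, which is unbounded, so infinitely many distinct knots arise. The paper itself leaves this point implicit.
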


\section{Background and definitions}\label{section:background}

\begin{defn}
The knot $K$ is $n$-adjacent to the knot $K'$, denoted $K \xrightarrow{\ n\ } K'$ if there exist $n$ crossing circles, $C_1, C_2, \ldots C_n$, for $K$ such that a generalized crossing change on any nonempty subset of $\{C_1, C_2, \ldots C_n\}$ yields $K'$. 
\end{defn}

We call a crossing change \textit{order $d$} when it changes $d$ of the crossings in a knot diagram. Note that, in the $\pm \frac{1}{2q}$-Dehn surgery description of generalized crossing changes, we have $d = q$.

We denote an order $d$ generalized crossing change along crossing circle $C$ for $K$ by $K_{[C^d]}$.  Using this notation we see that a knot diagram $K$ is 2-adjacent to a knot diagram $K'$ if there exist crossing circles $C_1$ and $C_2$, with generalized crossing changes of orders $d_1$ and $d_2$ respectively  such that the following hold:
\begin{enumerate}
\item $K_{[C_1^{d_1}]}\cong K'$,
\item $K_{[C_2^{d_2}]}\cong K'$, and
\item $K_{[C_1^{d_1}, C_2^{d_2}]}\cong K'$.
\end{enumerate}

\begin{defn}
A crossing circle for $K$ that bounds a disk in the complement of $K$ is called a \textit{nugatory crossing circle}.
\end{defn}

\begin{defn}
A (generalized) crossing change on a knot $K$ and its corresponding crossing circle are called \textit{cosmetic} if the crossing change yields a knot isotopic to $K$ and the crossing change is performed on a non-nugatory crossing.
\end{defn}

The question is open as to whether there are knots that admit cosmetic crossing changes (Problem 1.58 of Kirby's problem list \cite{KirbyList}).

\begin{gccc} For any knot $K \subset S^3$, the only way to obtain $K$ from a generalized crossing change on a crossing $c$ is if $c$ is a nugatory crossing.
\end{gccc}

It follows from work of Scharlemann and Thompson \cite{ScharThompUnknot} that the unknot admits no cosmetic generalized crossing changes. Torisu \cite{TorisuNugCrossKnots} shows that the conjecture holds for 2-bridge knots, and Kalfagianni proves the same for fibered knots \cite{KalfFiberedKnots}. Ito \cite{ItoCosmeticPretzel} shows the conjecture is true for genus one pretzel knots. Balm and Kalfagianni \cite{BalmKalfCosmetic} prove the result for satellites, and Balm, Friedl, Kalfagianni and Powell \cite{BFKP} obstruct cosmetic crossing changes in genus 1 knots. Lidman and Moore \cite{LidMoo} prove the cosmetic crossing conjecture for other large collections of knots.

Here, we relate this conjecture to 2-adjacency.

In a 2-adjacency, we require that $K_{[C_1^{d_1}]}\cong K_{[C_1^{d_1}, C_2^{d_2}]}$ and $K_{[C_2^{d_2}]}\cong K_{[C_1^{d_1}, C_2^{d_2}]}$.  This implies that in $K_{[C_1^{d_1}]}$, $C_2$ either encompasses a nugatory crossing or is a cosmetic circle.  Likewise the crossing that intersects $C_1$ in $K_{[C_2^{d_2}]}$ is either nugatory or cosmetic. Similarly, an $n$-adjacency requires $K_{[C_i^{d_i}]}\cong K_{[C_i^{d_i}, C_j^{d_j}]}$ for all $1 \le i, j \le n$ and $i \neq j$. We note that, in general, if $K$ is in any class of knots for which the generalized cosmetic crossing conjecture is true,  $C_i$ must be nugatory in $K_{[C_j^{d_j}]}$ for all $1 \le i, j \le n$ and $i \neq j$. 

For this reason, we call a crossing circle $C_i \in \{C_1, \ldots, C_n\}$ \textit{trivializable} if the crossing circle is not nugatory in $K$ but becomes nugatory in $K'$ after a crossing change on some nonempty subset $S$ of $\{C_1, \ldots, C_n\}$ so that $C_i \notin S$. If a crossing circle is not nugatory in $K$ and is not trivializable for any such $S$, then the crossing circle is a cosmetic circle.

\section{The $n$-adjacency graph for knots}\label{section:thegraph}

In this section, we introduce a graph to represent the relationships obtained from $n$-adjacency on knots. 

Let $\Gamma_n$, for $n \ge 2$, be the graph defined in the following way. Each knot is represented by a vertex of $\Gamma_n$. A directed edge exists between two vertices $K$ and $K'$ if and only if $K \xrightarrow{\ n\ } K'$.  The edge is directed toward $K'$. 

If a knot $K$ is $n$-adjacent to itself, there is a bi-directed loop beginning and ending at $K$. We call $K$ a \textit{neighbor} of $K'$ when a directed edge exists pointing from $K$ to $K'$ and $K$ and $K'$ are not isotopic.

Note that we can define $\Gamma_1$ similarly, where a 1-adjacency is a generalized crossing change.

We further define $\Gamma_n^c$ as the subgraph of $\Gamma_n$ so that a directed edge exists between two vertices $K$ and $K'$ if and only if $K \xrightarrow{\ n\ } K'$ and some crossing circle in the $n$-adjacency is a cosmetic circle.

\begin{lemma}\label{lem:lonelyunkot}
The unknot is an isolated vertex in $\Gamma_n^c$ for $n\ge 2$.
\end{lemma}

\begin{proof}
Work of Scharlemann and Thompson \cite{ScharThompUnknot} indicates that the unknot has no cosmetic generalized crossing changes. This means that for any knot $K$ that is $n$-adjacent to the unknot, every crossing circle used in the $n$-adjacency must be trivializable, so we have no edges directed from $K$ to the unknot in $\Gamma_n^c$. 
\end{proof}

In fact, for the same reason, Lemma \ref{lem:lonelyunkot} is true of any knot that satisfies the Generalized Cosmetic Crossing Conjecture. 

\begin{theorem}
If the Generalized Cosmetic Crossing Conjecture holds for all knots, then $\Gamma_n^c$ is totally disconnected and has no loops.
\end{theorem}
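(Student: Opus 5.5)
The plan is to show that, assuming the Generalized Cosmetic Crossing Conjecture for all knots, $\Gamma_n^c$ has no edges at all. Both conclusions then follow at once. With no directed edges, every vertex is isolated, so $\Gamma_n^c$ is totally disconnected. A loop at $K$ is by definition an edge from $K$ to itself, so there are no loops either. The argument generalizes the proof of Lemma \ref{lem:lonelyunkot}, in line with the remark following it.

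Fix $n\ge 2$ and suppose, toward a contradiction, that $\Gamma_n^c$ has an edge from $K$ to $K'$. By definition there are crossing circles $C_1,\dots,C_n$ for $K$ realizing $K \xrightarrow{\ n\ } K'$, and one of them, say $C_i$, is a cosmetic circle. Choose any $j\neq i$; this is possible because $n\ge 2$. Let $J = K_{[C_j^{d_j}]}$. By the definition of $n$-adjacency, $J\cong K'$ and also $K_{[C_j^{d_j},C_i^{d_i}]}\cong K'$. So the generalized crossing change along $C_i$, viewed as a crossing circle for the knot $J$, takes $J$ to a knot isotopic to $J$. Here we use the fact that surgeries on disjoint circles commute, and that $C_i$ still bounds a crossing disk meeting $J$ in two points of opposite sign. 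We can arrange this by taking the crossing disks disjoint, or at least disjoint from the other circles.

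The Generalized Cosmetic Crossing Conjecture, applied to $J$, now forces $C_i$ to be nugatory in $J = K_{[C_j^{d_j}]}$. Since $C_i$ is cosmetic it is in particular not nugatory in $K$. Taking $S=\{C_j\}$, which is nonempty and does not contain $C_i$, $C_i$ is therefore trivializable. This contradicts the definition of a cosmetic circle, which requires that $C_i$ not be trivializable for any such $S$. Hence no edge exists.

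The main obstacle is a technical point in the middle step: making sure $C_i$ remains a genuine crossing circle for $J$ after surgery on $C_j$, so that the conjecture applies. I would handle this by assuming, as is implicit throughout the paper, that the crossing circles in an $n$-adjacency have pairwise disjoint crossing disks. Then after twisting along $C_j$, the disk bounded by $C_i$ still meets $J$ in exactly two points with algebraic intersection zero. Everything else is a direct unwinding of the definitions.
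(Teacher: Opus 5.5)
Your proposal is correct and follows essentially the same route as the paper: the Generalized Cosmetic Crossing Conjecture, applied to $K_{[C_j^{d_j}]}\cong K'$, forces any would-be cosmetic circle $C_i$ to be nugatory there, hence trivializable, so $\Gamma_n^c$ has no edges. You make explicit the step the paper leaves implicit. Your single argument also covers loops (the case $K\cong K'$), which the paper handles with a separate one-line remark.
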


\begin{proof}
If $K'$ is a knot that satisfies the Generalized Cosmetic Crossing Conjecture, there is no cosmetic circle for $K'$, and so there can be no knots $K$ such that $K \xrightarrow{\ n\ } K'$ with some cosmetic circle. Further $K$ has no cosmetic generalized crossing changes, so there can be no loops in $\Gamma_n^c$.
\end{proof}

\begin{lemma}\label{lem:nestedgraphs}
For $n \ge 1$, $\Gamma_{n+1} \subseteq \Gamma_{n}$.
\end{lemma}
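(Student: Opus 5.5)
The plan is to show directly that every edge of $\Gamma_{n+1}$ is an edge of $\Gamma_n$. Both graphs have the same vertex set (all knots), so only the edges need checking. Suppose $K \xrightarrow{\ n+1\ } K'$, witnessed by crossing circles $C_1, \ldots, C_{n+1}$ for $K$ with orders $d_1, \ldots, d_{n+1}$. By the definition of $(n+1)$-adjacency, for every nonempty subset $S \subseteq \{C_1, \ldots, C_{n+1}\}$ the generalized crossing change on $S$ (with the fixed orders) yields $K'$.

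I would take the subcollection $\mathcal C' = \{C_1, \ldots, C_n\}$, discarding $C_{n+1}$, and keep the same orders $d_1, \ldots, d_n$. Any nonempty subset $S' \subseteq \mathcal C'$ is also a nonempty subset of $\{C_1, \ldots, C_{n+1}\}$, so the generalized crossing change on $S'$ yields $K'$. Hence $C_1, \ldots, C_n$ are $n$ crossing circles for $K$ witnessing $K \xrightarrow{\ n\ } K'$, and the directed edge from $K$ to $K'$ lies in $\Gamma_n$. The same argument covers loops: if $K'=K$, the loop at $K$ in $\Gamma_{n+1}$ is also a loop in $\Gamma_n$.

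For $n=1$, the statement reads $\Gamma_2 \subseteq \Gamma_1$. This is the same argument with a single retained circle $C_1$: the change on $\{C_1\}$ yields $K'$, which is exactly the $1$-adjacency (generalized crossing change) defining an edge of $\Gamma_1$.

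The only point needing care is that the definition fixes one order $d_i$ (equivalently one surgery coefficient $\pm\frac{1}{2q}$) per circle, used consistently across all subsets. Restricting to a subcollection preserves this choice, so there is no real obstacle. The statement follows from the monotonicity of the "every nonempty subset" condition under passing to subcollections.
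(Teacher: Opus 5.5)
Your proposal is correct and follows the paper's argument: the paper simply notes that $K \xrightarrow{\ n+1\ } K'$ implies $K \xrightarrow{\ n\ } K'$ directly from the definition. You make this explicit by restricting to the subcollection $\{C_1,\ldots,C_n\}$ with the same orders, and you also check the loop and $n=1$ cases.
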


It is easy to see from the definition of $n$-adjacency if $K \xrightarrow{n+1} K'$, then $K \xrightarrow{\ n \  } K'$ for $n\ge 1$. Then if $K$ is a neighbor of $K'$ in $\Gamma_{n+1}$, then $K$ is a neighbor of $K'$ in $\Gamma_n$.

\begin{theorem}\label{thm:gamma2wins}
For $n\ge 2$, $\Gamma_n \subseteq \Gamma_2$.
\end{theorem}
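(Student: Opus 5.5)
The plan is to derive Theorem \ref{thm:gamma2wins} directly from Lemma \ref{lem:nestedgraphs} by induction on $n$. All the graphs $\Gamma_n$ have the same vertex set, namely all knots. So the containment $\Gamma_n \subseteq \Gamma_2$ only needs to be checked on edges (including loops): every directed edge $K \to K'$ of $\Gamma_n$ must also be a directed edge of $\Gamma_2$. Equivalently, we must show that $K \xrightarrow{\ n\ } K'$ implies $K \xrightarrow{\ 2\ } K'$ whenever $n \ge 2$.

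The base case $n=2$ is trivial. For the inductive step, suppose $\Gamma_n \subseteq \Gamma_2$ for some $n \ge 2$. Lemma \ref{lem:nestedgraphs} gives $\Gamma_{n+1} \subseteq \Gamma_n$, and since containment of subgraphs is transitive, $\Gamma_{n+1} \subseteq \Gamma_2$.

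For completeness, I would also verify Lemma \ref{lem:nestedgraphs} concretely, since the paper only remarks that it is easy. Suppose $C_1, \ldots, C_{n+1}$ are crossing circles for $K$ with associated orders $d_1, \ldots, d_{n+1}$, such that a generalized crossing change on any nonempty subset yields $K'$. Discard $C_{n+1}$. Every nonempty subset of $\{C_1, \ldots, C_n\}$ is also a nonempty subset of $\{C_1, \ldots, C_{n+1}\}$, so surgery on it, with the same orders $d_i$, still yields $K'$. Hence $K \xrightarrow{\ n\ } K'$.

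Alternatively, one can skip the induction. Given an $n$-adjacency witnessed by $C_1, \ldots, C_n$, keep only $C_1$ and $C_2$. The three conditions $K_{[C_1^{d_1}]}\cong K'$, $K_{[C_2^{d_2}]}\cong K'$ and $K_{[C_1^{d_1}, C_2^{d_2}]}\cong K'$ from Section \ref{section:background} are special cases of the $n$-adjacency hypothesis. Loops, i.e.\ the case $K \cong K'$, are handled identically, so bi-directed loops in $\Gamma_n$ also appear in $\Gamma_2$.

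There is no real obstacle. The only point needing care is that the subset argument keeps the same surgery coefficients $\pm\frac{1}{2q}$ on the retained circles, so no reselection of orders is required.
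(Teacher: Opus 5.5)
Your proof is correct and matches the paper's argument: induction on $n$, with the trivial base case $n=2$ and the inductive step $\Gamma_{n+1}\subseteq\Gamma_n\subseteq\Gamma_2$ supplied by Lemma~\ref{lem:nestedgraphs}. Your direct alternative, which restricts an $n$-adjacency to $C_1, C_2$ with the same orders $d_1, d_2$, is also valid and makes the induction unnecessary.
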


\begin{proof}
We induct on $n$, noting that the statement is tautologically true for $n=2$.

Suppose $\Gamma_n \subseteq \Gamma_2$. By Lemma \ref{lem:nestedgraphs}, $\Gamma_{n+1} \subseteq \Gamma_n$, so we have $\Gamma_{n+1} \subseteq \Gamma_n \subseteq \Gamma_2$.
\end{proof}

Because every adjacency graph is contained in $\Gamma_2$, studying  $\Gamma_2$ can give us valuable insight into adjacencies of higher order. On the other end of the spectrum, we define $\Gamma_\infty = \bigcap_{n = 2}^\infty \Gamma_n$.

We begin to connect the graph to the literature with the following theorem of 
Howards and Luecke \cite{HLntrivial}.

\begin{theorem}\label{thm:genus1}
Let $K$ be a non-trivial knot of genus $g$. Then $K$ fails to be $n$-adjacent to the unknot for all $n$ with $n \ge 3g-1$.
\end{theorem}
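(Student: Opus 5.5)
This is the theorem of Howards and Luecke \cite{HLntrivial}, and the paper cites it rather than reproving it, so the plan is to reconstruct their argument. We use Gabai's sutured manifold theory applied to the link $K \cup C_1 \cup \dots \cup C_n$. Suppose $K \xrightarrow{\ n\ } U$ via crossing circles $C_1,\dots,C_n$, and set $M = S^3 \setminus N(K \cup C_1 \cup \dots \cup C_n)$.

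\textbf{Step 1: reduce to a minimal genus surface disjoint from the circles.} Take a Seifert surface $S$ for $K$ in $M$ that has minimal genus among surfaces in the complement of $\bigcup C_i$. Isotope it so that it meets each crossing disk $D_i$ in a single arc. Using the fact that $g(K) > 0 = g(U)$ together with Gabai's theorem (in the form used by Kalfagianni--Lin), we get that $S$ remains norm-minimizing after all but at most one surgery slope on each $C_i$. In particular, if $S$ stays taut in $M(C_i)$ for the $\pm\frac{1}{2q}$ filling, then $K_{[C_i]}$ would have genus $g \ge 1$, contradicting that it is the unknot. So for every $i$, filling along the slope $\pm\frac{1}{2q_i}$ must be the exceptional slope at which $S$ compresses.

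\textbf{Step 2: use all nonempty subsets.} Since every subset of surgeries gives the unknot, the surface $S$ must compress after filling any single $C_i$ while the remaining $C_j$ stay drilled. The compressing disks in $S^3_{C_i}\setminus N(K\cup \bigcup_{j\ne i} C_j)$ then give disjoint, essential, non-parallel curves on $S$. Concretely, we follow the arc $S \cap D_i$ and produce a compressing disk for $S$ meeting the other $C_j$ trivially. The resulting $n$ curves must be pairwise disjoint and pairwise non-isotopic in $S$, and none may be parallel to $\partial S$. Non-parallelism should follow because two isotopic compressing curves would force $C_i$ and $C_j$ to be parallel, or one of them to be nugatory. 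That in turn contradicts the minimality/thin-position choice, or gives a contradiction with the unknot having no cosmetic crossings (Scharlemann--Thompson, as in Lemma~\ref{lem:lonelyunkot}).

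\textbf{Step 3: count curves.} A genus $g$ surface with one boundary component contains at most $3g-2$ pairwise disjoint, non-isotopic, essential simple closed curves not parallel to the boundary: a pants decomposition of $S$ has $3g-3+1$ curves, and excluding the boundary-parallel one leaves $3g-2$. Hence $n \le 3g-2$, that is, $n$-adjacency to the unknot fails for $n \ge 3g-1$.

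\textbf{Main obstacle.} The hard step is Step 2. We must build, simultaneously, compressing curves that are disjoint and non-isotopic from the hypothesis that every nonempty subset of surgeries yields the unknot. This needs the full strength of Gabai's sutured manifold hierarchy, applied inductively to fillings on subsets of the circles. I would first try induction on $n$, at each stage filling one circle and keeping $S$ taut in the complement of the rest.
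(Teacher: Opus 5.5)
The paper gives no proof of this statement; it is quoted from Howards--Luecke \cite{HLntrivial}. Your reconstruction therefore has to stand on its own, and its decisive step does not. Step 2 is asserted rather than argued, and nothing in your sketch makes the proposed mechanism work. A compressing disk $E_i$ for the twisted surface $S_i$ lives in the manifold obtained by surgery on $C_i$, while $E_j$ lives in the one obtained by surgery on $C_j$. After identifying both surfaces with $S$, the curves $\partial E_i$ and $\partial E_j$ are unrelated, so there is no reason for them to be disjoint. The claim that ``two isotopic compressing curves would force $C_i$ and $C_j$ to be parallel, or one of them nugatory'' is a hope, not an argument. Even the existence of $E_i$ in the complement of the remaining circles is unproved. $S_i$ failing to be norm-minimizing in the exterior of $K_{[C_i]}\cup\bigcup_{j\neq i}C_j$ does not make it compressible there. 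The unknot exterior, where $S_i$ certainly compresses, still contains the other circles.

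The objects that are automatically pairwise disjoint are the arcs $\alpha_i=S\cap D_i$. Gabai's theorem does show they are essential and pairwise non-parallel. If $\alpha_i\parallel\alpha_j$, then $C_i$ and $C_j$ cobound an annulus in the complement of $K$, so changing both is a single twist along $D_i$ by $q_i+q_j$; this yields either a second exceptional slope or $K$ itself. But a genus $g$ surface with one boundary circle carries $6g-3$ disjoint, pairwise non-parallel essential arcs. So this only recovers the Kalfagianni--Lin bound $n\le 6g(K)-3$ quoted later in the paper. Getting from $6g-3$ down to $3g-2$ is precisely the unknot-specific content of the Howards--Luecke theorem, and your Step 2 is a placeholder for it.

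Step 1 also needs repair before it can feed anything. Tautness of $S$ after filling $C_i$ along the twisting slope, with the other circles still drilled, says that $K_{[C_i]}$ has genus $g$ in the complement of the images of the $C_j$, $j\neq i$. It does not say that $g(K_{[C_i]})=g$. To conclude that the twisting slope is the exceptional one, you need the unknot to bound a disk disjoint from those circles. That requires its own argument, for instance an induction on $n$ using that the subsets containing $C_i$ form an $(n-1)$-adjacency from the unknot to itself. Likewise, your bound is supposed to be in terms of $g=g(K)$, but $S$ only minimizes genus in the complement of $\bigcup C_i$, and you never show these two genera agree.

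Finally, the count in Step 3 is garbled. A pants decomposition of a genus $g$ surface with one boundary circle has $3g-3+1=3g-2$ curves, none of them boundary-parallel, so there is nothing to exclude. The number $3g-2$ is right, but not for the reason you give.
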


Further, Howards and Luecke \cite{HLntrivial} construct, for each $n$, a non-trivial knot that is $n$-adjacent to the unknot. This provides two results about $\Gamma_n$ that contrast with Lemma \ref{lem:lonelyunkot}.

\begin{cor}\label{cor:unknothasneighbors}
For every $n\ge 2$, the unknot has a non-trivial neighbor in $\Gamma_n$.
\end{cor}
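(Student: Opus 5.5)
The plan is to combine the construction of Howards and Luecke \cite{HLntrivial} with the definition of a neighbor in $\Gamma_n$. Fix $n\ge 2$. Howards and Luecke construct a non-trivial knot $K_n$ that is $n$-adjacent to the unknot $U$, and their crossing changes are ordinary ones, so $q=1$. An ordinary crossing change is the $\pm\frac{1}{2}$-Dehn surgery case, i.e. a generalized crossing change with $q=1$. Hence the $n$ crossing circles they exhibit for $K_n$ witness $K_n \xrightarrow{\ n\ } U$ in our generalized sense, and this gives a directed edge from $K_n$ to $U$ in $\Gamma_n$.

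Next I will check that this edge makes $K_n$ a neighbor of $U$ in the sense of the definition, namely that $K_n$ and $U$ are not isotopic. This is immediate, since $K_n$ is non-trivial by construction. As a consistency check, Theorem \ref{thm:genus1} shows that such a $K_n$ must have genus $g$ with $3g-1>n$. So the genus of $K_n$ grows with $n$, which matches the Howards--Luecke examples and does not contradict anything.

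To avoid relying on a separate example for every $n$, I could also note that it suffices to have, for each $n$, a knot that is $m$-adjacent to $U$ for some $m\ge n$. By Lemma \ref{lem:nestedgraphs}, $\Gamma_m\subseteq\Gamma_n$ for $m\ge n$, so the edge persists in $\Gamma_n$. Howards and Luecke provide examples for every $n$ in any case, so this is only a remark.

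The only point needing care is the translation between conventions. Howards and Luecke phrase $n$-adjacency using honest crossing changes, possibly with signs chosen per crossing. I need every nonempty subset of their circles, with the prescribed $\pm\frac12$ surgeries, to yield the unknot, and this is exactly their definition of $n$-adjacency. Therefore nothing beyond citing their construction is required, and I do not expect any genuine obstacle.
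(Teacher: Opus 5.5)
Your proposal is correct and matches the paper, which obtains the corollary directly from Howards and Luecke's construction, for each $n$, of a non-trivial knot $n$-adjacent to the unknot via ordinary ($q=1$) crossing changes. Your extra remarks, that ordinary crossing changes are generalized ones and that non-triviality gives the neighbor condition, simply make explicit what the paper leaves implicit.
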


\begin{prop}\label{prop:unknotinfvalance}
In $\Gamma_2$, the unknot has infinite valance. 
\end{prop}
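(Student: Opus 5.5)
Since $\Gamma_n \subseteq \Gamma_2$ for every $n \ge 2$ (Theorem \ref{thm:gamma2wins}), every knot that is $n$-adjacent to the unknot for some $n\ge 2$ is also $2$-adjacent to it. So it suffices to produce infinitely many pairwise non-isotopic nontrivial knots $K_1, K_2, \ldots$ such that each $K_j$ is $n_j$-adjacent to the unknot for some $n_j \ge 2$. Each such $K_j$ then gives a distinct directed edge into the unknot in $\Gamma_2$, and the unknot has infinite valence.

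The plan is to use the Howards--Luecke construction behind Corollary \ref{cor:unknothasneighbors}. For each $n\ge 2$ it gives a nontrivial knot $K^{(n)}$ with $K^{(n)} \xrightarrow{\ n\ } U$. To make these pairwise distinct, I would use Theorem \ref{thm:genus1} as a genus obstruction. If $K^{(n)}$ has genus $g_n$, then $n < 3g_n - 1$, so $g_n > (n+1)/3$. Hence $g_n \to \infty$ as $n\to\infty$, and the knots $K^{(n)}$ have unbounded genus. Passing to a subsequence with strictly increasing genus yields infinitely many pairwise non-isotopic knots. By Lemma \ref{lem:nestedgraphs} each of them is $2$-adjacent to the unknot, so each is a neighbor of the unknot in $\Gamma_2$.

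The main point to check is that the Howards--Luecke knots are nontrivial for every $n$. This is exactly the content of Corollary \ref{cor:unknothasneighbors}, which I may assume. Given nontriviality, the genus bound from Theorem \ref{thm:genus1} applies directly, with no need to compute any invariants.

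As an independent backup, one could give an explicit infinite family: a connected sum $J \# \overline{J}$-style or doubled construction, in which two crossing circles each unknot the knot and are mutually nugatory afterwards. One then distinguishes the family members by genus or the Alexander polynomial. I expect this to be unnecessary given the argument above.
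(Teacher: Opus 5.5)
Your argument is correct and is essentially the paper's: both combine the Howards--Luecke knots, the inclusion $\Gamma_n \subseteq \Gamma_2$, and the bound $n < 3g-1$ of Theorem~\ref{thm:genus1} to show that these neighbors cannot all lie in a finite set. The paper argues by contradiction, taking a finite neighbor set and its maximal genus, whereas you argue directly that $g_n > (n+1)/3$ is unbounded; yours is slightly cleaner, since it applies the ($q=1$) Howards--Luecke bound only to knots that are $n$-adjacent to the unknot via ordinary crossing changes. You should drop the backup paragraph, since it is not needed and the connected-sum option fails for ordinary crossing changes: for nontrivial $J$, the composite knot $J\#\overline{J}$ cannot be unknotted by a single ordinary crossing change, because unknotting number one knots are prime (Scharlemann).
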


\begin{proof}
By Corollary \ref{cor:unknothasneighbors}, the unknot has a non-trivial neighbor, $K_n$, in $\Gamma_n$ for every integer $n\ge 2$. Theorem \ref{thm:gamma2wins} tells us that $\Gamma_n \subseteq \Gamma_2$, so each $K_n$ is also a neighbor of the unknot in $\Gamma_2$. Let $\mathcal K$ be the collection of neighbors of the unknot in $\Gamma_2$.

Suppose, for the sake of contradiction, that the unknot, $\mathcal U$, has finitely many neighbors in $\Gamma_2$. Call the collection $\mathcal K = \{K_1, \ldots, K_m\}$ for some integer $m \ge 1$.  Theorem \ref{thm:genus1} tells us that for each  element $K_i\in \mathcal K$, $K_i$ is $n_i$-adjacent to $\mathcal U$ for some $n_i \ge 2$. Call $g_i$ the genus of $K_i$ for each $K_i \in \mathcal K$. Let $K_j \in \mathcal K$ be the knot in $\mathcal K$ with the largest genus, $g_j$. By Theorem \ref{thm:genus1}, $n_i < 3g_i-1$, and in particular, $n_j < 3g_j -1$. Since $g_j \ge g_i$ for $1 \le i \le m$, it follows that $n_i < 3g_j - 1$. That is, every $n_i$ is bounded above by the same finite number, and we have a contradiction to Corollary \ref{cor:unknothasneighbors}.
\end{proof}

We note here that because Howards and Luecke work in the case that $q=1$, the infinitely many neighbors for the unknot found in this proposition all result from order 1 crossing changes.

%Askitas and Kalfagiani \cite{AKOnKnotAdjacency} show that, when $q = 1$, if $K$ is $n$-adjacent to the unknot for some $n\ge 3$, then $K$ has trivial Alexander polynomial, and as a result, conclude that the only fibered or alternating knot that is $n$-adjacent to the unknot for $n\ge 3$ is the unknot itself. This gives us more information about the non-trivial neighbors of the unknot in $\Gamma_n$.

%\begin{cor}\label{cor:unknothasneighbors}
%For every $n\ge 3$, the unknot's non-trivial neighbors in $\Gamma_n$ that result are neither fibered nor alternating.
%\end{cor}

Kalfagianni and Lin \cite{KalfLinGenus} generalize the work in \cite{HLntrivial} with the following results, using generalized crossing changes.

\begin{theorem}
Suppose $K,K'$ are knots with $g(K)>g(K')$. If $K \xrightarrow{\ n\ } K'$, then $n\le 6g(K) - 3$.
\end{theorem}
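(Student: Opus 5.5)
This statement is the Kalfagianni--Lin bound. The natural route is to adapt the Howards--Luecke argument behind Theorem \ref{thm:genus1} to generalized crossing changes and a nontrivial target knot $K'$.

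Suppose $K \xrightarrow{\ n\ } K'$ via crossing circles $C_1,\dots,C_n$ with $g(K)>g(K')$. We may first isotope the crossing disks $D_1,\dots,D_n$ so that they are pairwise disjoint. The idea is that the circles can be chosen so that they bound disjoint disks, each meeting $K$ twice. We then work in the exterior $M = S^3\setminus \eta(K\cup C_1\cup\dots\cup C_n)$. We take a minimal genus Seifert surface $S$ for $K$ in the complement of the $C_i$. By a cut-and-paste argument along the disks $D_i$, we arrange that $S\cap D_i$ is a single arc for each $i$. This is the standard step: remove inessential intersections using incompressibility and minimality of genus. As a result, each $C_i$ can be isotoped to lie on $S$ as a curve $\partial$-parallel to a neighborhood of an arc, or at least $C_i$ lies in $M$ with $S$ disjoint from it.

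The key step uses Gabai's sutured manifold theory, in the form used by Kalfagianni--Lin. The hypothesis $g(K')<g(K)$ means that $\pm\frac{1}{2q_i}$ surgery on $C_i$ reduces the genus. By Gabai's theorem on Thurston norm under Dehn filling, the surface $S$ remains norm-minimizing after all but at most one filling slope on $C_i$, unless $S$ becomes compressible in a controlled way. Since every nonempty subset of surgeries yields $K'$, and $K'$ has strictly smaller genus, each $C_i$ must be "essential with respect to $S$" in the following sense. After surgery, $S$ must compress. By the Scharlemann--Thompson/Gabai argument, this forces the arc $S\cap D_i$ to be essential on $S$, so each $C_i$ determines an essential arc $\alpha_i$ on $S$.

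We then show that these arcs $\alpha_1,\dots,\alpha_n$ are pairwise disjoint and pairwise non-parallel on $S$. Parallel arcs would make $C_i\cup C_j$ cobound an annulus in the complement of $S$. Surgery on both would then be equivalent to a single surgery with a combined slope, contradicting the requirement that the surgeries yield the same knot at the given slopes for every subset. This contradiction is obtained by comparing genus, or using the twisting argument from Howards--Luecke. A surface of genus $g$ with one boundary component contains at most $6g-3$ pairwise disjoint, non-parallel essential arcs, by the Euler characteristic count for an ideal triangulation. Hence $n\le 6g(K)-3$. The main obstacle is the middle step: justifying via sutured manifold hierarchies that each generalized crossing circle yields an essential arc, uniformly for all $q_i$. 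I would take this directly from Gabai's Dehn filling theorem applied to $M$ with the product sutured structure coming from $S$.
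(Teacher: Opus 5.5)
The paper gives no proof of this statement; it quotes it from Kalfagianni--Lin \cite{KalfLinGenus}. Your outline has the shape of their argument: a minimal genus Seifert surface $S$ missing all crossing circles, arcs $\alpha_i=S\cap D_i$ that are essential and pairwise non-parallel, and the bound of $6g-3$ disjoint non-parallel essential arcs on a once-bounded genus $g$ surface. That count is correct. However, the first step has a genuine gap, and you spend the heavy machinery in the wrong place.

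You write ``take a minimal genus Seifert surface $S$ for $K$ in the complement of the $C_i$.'' The argument needs a Seifert surface of genus \emph{exactly} $g(K)$ that is disjoint from $C_1\cup\dots\cup C_n$. A priori, the minimal genus $x$ of a Seifert surface in the exterior of $K\cup C_1\cup\dots\cup C_n$ satisfies only $x\ge g(K)$. If $x>g(K)$, your count gives only $n\le 6x-3$. Your non-parallelism step also fails, because $S$ need not be norm-minimizing in the exterior of $K\cup C_i$.

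Proving $x=g(K)$ is the heart of the Kalfagianni--Lin result. This is exactly where Gabai's theorem and the hypothesis on \emph{every} nonempty subset are needed. One way to do it: for disjoint $A,B\subseteq\{1,\dots,n\}$, let $w(A,B)$ be the minimal genus of a Seifert surface for $K_{[C_A]}$ (the knot after changing the crossings indexed by $A$) that is disjoint from the circles indexed by $B$. Show $w(A,B)=w(A,\emptyset)$ by induction on $|B|$. For $i\in B$, work in the exterior of $K_{[C_A]}$ together with the circles in $B$:
\begin{itemize}
\item the meridian filling of $\partial\eta(C_i)$ gives minimal genus $w(A,B\setminus\{i\})=w(A,\emptyset)$;
\item the twisting filling gives $w(A\cup\{i\},B\setminus\{i\})=g(K')$.
\end{itemize}
If $w(A,B)>w(A,\emptyset)$, both slopes lower the norm; when $A=\emptyset$ this uses $g(K')<g(K)$. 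That contradicts Gabai's at-most-one-exceptional-slope theorem. If the link is split, discard the split-off circles and apply induction.

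By contrast, essentiality of $\alpha_i$ needs no sutured manifold theory. Once $S\cap C_i=\emptyset$, $S\cap D_i$ is one arc plus circles, and you may shrink $D_i$ to a neighborhood of the arc. An inessential arc would cut a disk off $S$. Thickening that disk shows $C_i$ bounds a disk missing $K$, so $K_{[C_i]}=K$, contradicting $g(K')<g(K)$.

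The parallel-arc step also needs to be made precise. The annulus between $C_i$ and $C_j$ is untwisted (it induces the zero framing), so doing both changes equals a single change on $C_i$ of order $q_i+q_j$. If $q_i+q_j=0$, this returns $K\neq K'$. Otherwise, two distinct slopes on $\partial\eta(C_i)$ both produce $K'$ with $g(K')<g(K)=g(S)$, which contradicts Gabai. ``Yielding the same knot at different slopes'' is not contradictory by itself, and this step again requires $g(S)=g(K)$.

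Finally, disjointness of the $D_i$ is part of the definition of adjacency (all the crossings lie in one projection). It is not something you can arrange by an isotopy in general.
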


\begin{cor}\label{cor:kalflincor}
If $K \xrightarrow{\ n\ } K'$ for all $n \in \mathbb N$, then $K$ and $K'$ are isotopic.
\end{cor}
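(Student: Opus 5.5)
The plan is to argue by contradiction, using the preceding theorem of Kalfagianni and Lin together with the monotonicity of adjacency in Lemma \ref{lem:nestedgraphs}. Suppose $K \xrightarrow{\ n\ } K'$ for every $n \in \mathbb N$, but $K$ and $K'$ are not isotopic. I would compare the genera $g(K)$ and $g(K')$ and handle two cases.

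\emph{Case $g(K) > g(K')$.} The theorem of Kalfagianni and Lin stated just above gives $n \le 6g(K)-3$ for every $n$ with $K \xrightarrow{\ n\ } K'$. Taking $n = 6g(K)-2$, which is allowed by hypothesis, gives a contradiction.

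\emph{Case $g(K) \le g(K')$.} This is the step I expect to be the main obstacle, because the theorem as quoted needs a strict inequality $g(K)>g(K')$. My plan is to prove that generalized crossing changes along an $n$-adjacency with $n\ge 2$ cannot raise genus, so that $g(K)\ge g(K')$.

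\begin{itemize}
\item I would take a minimal genus Seifert surface $S$ for $K$. Following Kalfagianni--Lin (and Howards--Luecke when $q=1$), for a $2$-adjacency the crossing circles $C_1,C_2$ can be isotoped in the complement of $K$ so that each crossing disk meets $S$ in a single arc. This uses the fact that the crossing circle stays nontrivial/non-nugatory in both surgeries, together with Gabai's sutured manifold theory applied to $S^3\setminus \eta(K\cup C_1\cup C_2)$.
\item Twisting along such disks carries $S$ to a Seifert surface of $K'$ of the same genus, which gives $g(K')\le g(K)$.
\item The equality case $g(K)=g(K')$ then remains. Here I would invoke the part of Kalfagianni and Lin's work underlying the quoted theorem: for $n$ large relative to the genus, the $n$ crossing disks, all disjoint from $S$ after isotopy, must contain two that are parallel in the complement of $S$. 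Parallel circles $C_i$ and $C_j$ force the combined change $K_{[C_i^{d_i},C_j^{d_j}]}$ to be either $K$ itself or a twist that differs from $K'$, by comparing linking/twisting data. The first outcome, with $n$ arbitrary, forces $K\cong K'$, which is a contradiction.
\end{itemize}

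If the equality case resists this approach, the fallback is to cite the full version of Kalfagianni--Lin's result from \cite{KalfLinGenus}, where the corollary is derived directly: $n$-adjacency for all $n$ forces each crossing circle to bound a disk in the complement of $K$, so that $K'=K$.

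Either way, both cases contradict the assumption that $K$ and $K'$ are not isotopic, which establishes Corollary \ref{cor:kalflincor}. As a consequence, $\Gamma_\infty$ consists only of loops.
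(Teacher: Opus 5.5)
Your case $g(K)>g(K')$ is fine: applying the $6g(K)-3$ bound at $n=6g(K)-2$ gives the contradiction. The gap is the case $g(K)\le g(K')$.

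Your first bullet assumes what it is supposed to prove. Apply Gabai's theorem to the exterior of $K\cup C_1\cup C_2$ and compare the two fillings of $C_1$ that yield $K$ and $K_{[C_1^{d_1}]}$. This shows that the least genus of a Seifert surface for $K$ disjoint from $C_1\cup C_2$ is $\max\{g(K),g(K')\}$. So a \emph{minimal genus} surface of $K$ off which both circles can be isotoped exists precisely when $g(K)\ge g(K')$.

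The justification you give is also wrong. As observed in Section~\ref{section:background}, $C_2$ is nugatory or cosmetic in $K_{[C_1^{d_1}]}$, and it is nugatory whenever $K'$ satisfies the Generalized Cosmetic Crossing Conjecture. Gabai's theorem used one circle at a time only says that each $C_i$ either misses a minimal genus surface of $K$ or raises the genus; it does not prevent both circles from raising it. For instance, let $C_1,C_2$ be parallel copies of one circle $C$, each with twist $q$. Then the $2$-adjacency says only that $K_{[C^{q}]}\cong K_{[C^{2q}]}$. If $g(K)<g(K')$, this is a cosmetic generalized crossing change on $K_{[C^{q}]}$ whose inverse lowers genus, and nothing in your sketch excludes it.

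The equal-genus step does not work either. Suppose $C_i,C_j$ are parallel with twists $q_i,q_j$ and $q_i+q_j\neq 0$. Then $K_{[C_i^{q_i},C_j^{q_j}]}=K_{[C^{q_i+q_j}]}$, and the adjacency hypothesis \emph{requires} this knot to be $K'$. No linking or twisting data, and (since $g(K)=g(K')$) no genus count, contradicts that.

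What the pigeonhole argument on the at most $6g-3$ parallelism classes of essential arcs actually yields is weaker. Take consecutive partial sums of the twists in one class; a vanishing sum would force $K\cong K'$. Otherwise you get a single crossing circle $C$ with $K_{[C^{s}]}\cong K'$ for at least $n/(6g-3)$ distinct values of $s$. To contradict this for every $n$, you need a bound, independent of the circle, on how many twists along a crossing circle of $K$ can produce $K'$. That is the substantive input from Kalfagianni--Lin, where essential tori and hyperbolic Dehn surgery enter, and its outcome is Theorem~\ref{thm:nbounded}. That theorem also disposes of $g(K)<g(K')$, so with it no case split is needed.

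Your fallback just cites the statement, and describes it inaccurately: the circles change with $n$, and what is proved is $n\le C(K,K')$, not that the circles become nugatory. The paper gives no argument of its own; it quotes the corollary from \cite{KalfLinGenus}. It is immediate from Theorem~\ref{thm:nbounded} by taking $n=C(K,K')+1$ when $K\not\cong K'$.
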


Then we can prove the following about the adjacency graph.

\begin{prop}\label{prop:lonelyeveryone}
Every knot is isolated in $\Gamma_\infty$.
\end{prop}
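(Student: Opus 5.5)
The plan is to unwind the definition of $\Gamma_\infty = \bigcap_{n=2}^\infty \Gamma_n$ and then apply Corollary \ref{cor:kalflincor} directly. Being \emph{isolated} means having no edges to or from any other vertex. So it suffices to show that whenever $\Gamma_\infty$ contains a directed edge from $K$ to $K'$, the knots $K$ and $K'$ are isotopic. Then the only edges present in $\Gamma_\infty$ are loops, and no knot has a neighbor there.

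Suppose $\Gamma_\infty$ has an edge directed from $K$ to $K'$. By definition of the intersection, this edge lies in $\Gamma_n$ for every $n \ge 2$, so $K \xrightarrow{\ n\ } K'$ for every $n\ge 2$. The corollary is phrased for all $n \in \mathbb N$, so I also need the case $n=1$. This follows from the observation preceding Theorem \ref{thm:gamma2wins}: an $(n+1)$-adjacency yields an $n$-adjacency by discarding crossing circles. Concretely, $K \xrightarrow{\ 2\ } K'$ gives a single generalized crossing change, namely along $C_1$, taking $K$ to $K'$, which is a $1$-adjacency.

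Hence $K \xrightarrow{\ n\ } K'$ for all $n \in \mathbb N$, and Corollary \ref{cor:kalflincor} gives that $K$ and $K'$ are isotopic. Since vertices of $\Gamma_\infty$ are isotopy classes of knots, the edge must be a loop at $K$. Therefore $K$ is not a neighbor of any knot, and no knot is a neighbor of $K$. Both incoming and outgoing edges are covered by this argument, since we began with an arbitrary directed edge.

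I expect no step to be a serious obstacle, because the content is entirely in Corollary \ref{cor:kalflincor}. The only points needing care are the $n=1$ case and the convention that loops do not count against being isolated. The second point is consistent with the paper's definition of neighbor, which excludes isotopic pairs, and with the existence of loops in $\Gamma_n$ at every knot via nugatory crossing circles.
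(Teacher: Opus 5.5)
Your proposal is correct and follows the paper's proof essentially step for step. Like the paper, you use the nesting $\Gamma_2\subseteq\Gamma_1$ to pass from an edge lying in every $\Gamma_n$ ($n\ge 2$) to $n$-adjacency for all $n\in\mathbb N$, and then invoke Corollary \ref{cor:kalflincor} to force $K$ and $K'$ to be isotopic; your explicit remark that only loops survive simply spells out what the paper's use of ``neighbor'' leaves implicit.
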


\begin{proof}
Suppose on the contrary that $K$ is a neighbor of $K'$ in $\Gamma_\infty = \bigcap_{n=2}^\infty \Gamma_n$. Then $K \xrightarrow{\ n\ } K'$ for all $n\ge 2$. By Lemma \ref{lem:nestedgraphs}, we have that $\Gamma_2 \subseteq \Gamma_1$, so we have $K \xrightarrow{\ n\ } K'$ for all $n \in \mathbb N$. Then by Corollary \ref{cor:kalflincor}, $K$ and $K'$ are isotopic, a contradiction.  
\end{proof}

Additionally, Kalfagianni \cite{KalfFiberedKnots} proves the following theorem:

\begin{theorem}\label{thm:genus2}
If $K'$ is a fibered knot and $K \xrightarrow{\ n\ } K'$ for some $n\ge 2$, then either $K$ and $K'$ are isotopic or $g(K) > g(K')$.
\end{theorem}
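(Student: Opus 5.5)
This theorem is cited from Kalfagianni \cite{KalfFiberedKnots}, so the plan is to reconstruct her argument via sutured manifold theory. Assume $K \xrightarrow{\ n\ } K'$ with $n\ge 2$ and $K'$ fibered. Take crossing circles $C_1, C_2$ from the adjacency and write $L = C_1\cup C_2$. Then $K$, viewed in the complement of $L$, becomes $K'$ after any nonempty combination of the surgeries $-1/(2q_1)$ on $C_1$ and $-1/(2q_2)$ on $C_2$. Since each $C_i$ bounds a crossing disk $D_i$ meeting $K$ twice with algebraic intersection zero, I would first isotope a minimal genus Seifert surface $S$ for $K$ in the complement of $L$ so that $S\cap D_i$ is a single arc. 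This is the standard step in Kalfagianni--Lin and Howards--Luecke. Then $S$ survives each surgery as a Seifert surface, which gives $g(K')\le g(K)$.

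Next I would use Gabai's theorem on Dehn filling of taut sutured manifolds. Let $M = S^3\setminus \eta(K\cup C_1)$, or better the complement of $K\cup L$, and consider the sutured manifold obtained by cutting along $S$. If $g(K') = g(K)$, then $S$ remains minimal genus after filling along $C_1$. The key step is to show that fiberedness of $K'$ forces rigidity. Since $K'=K_{[C_1^{d_1}]}$ is fibered with fiber of genus $g(K)$, the surgered surface $S'$ is a fiber. The complementary sutured manifold $S'\times I$ contains the image of $C_2$, and the crossing change at $C_2$ again gives $K'$. By Gabai (or Kalfagianni's argument on fibered knots satisfying the cosmetic crossing conjecture), a crossing circle disjoint from a fiber surface in a product sutured manifold that yields the same knot must be nugatory. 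So $C_2$ is nugatory in $K_{[C_1^{d_1}]}$, and symmetrically $C_1$ is nugatory in $K_{[C_2^{d_2}]}$.

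The conclusion then follows by undoing surgeries. A nugatory circle bounds a disk disjoint from the knot, so surgery on it changes nothing. Hence $K_{[C_1^{d_1}]}\cong K_{[C_1^{d_1},C_2^{d_2}]}$ trivially. I would then argue, using the disk bounded by $C_2$ in the complement of $K_{[C_1^{d_1}]}$ and the arc $S\cap D_1$, that $C_1$ is nugatory in $K$ itself, so $K\cong K'$. I expect this last transfer to be the main obstacle: showing that a disk for one circle after surgery on the other lifts to make a circle nugatory in $K$. I would approach it through the Thurston-norm/fibration argument applied to the two-component link $K\cup C_1$, namely that $C_1$ is isotopic into a fiber and hence bounds a disk there.
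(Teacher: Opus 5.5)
The paper does not prove this statement; it quotes it from Kalfagianni. Your outline therefore has to stand on its own, and it has two genuine gaps.

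\emph{First gap: Step~1 is false as stated.} A minimal genus Seifert surface of $K$ cannot in general be isotoped off a crossing circle. Take a crossing circle $C$ for the unknot whose crossing change yields the trefoil. A spanning disk disjoint from $C$ would survive the surgery and make $K_{[C]}$ trivial. Your Step~1 uses one circle at a time, so it would prove $g(K_{[C]})\le g(K)$ for every single crossing change, which this example refutes. The hypothesis $n\ge 2$ has to enter exactly here.

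The usable object is a surface $S$ of minimal genus $g_L$ among Seifert surfaces of $K$ disjoint from $C_1\cup C_2$. Its genus $g_L$ may exceed $g(K)$, and one always has $g(K')\le g_L$. The dichotomy is handled by Gabai's theorem: a norm-minimizing surface missing a boundary torus stays norm-minimizing in all fillings of that torus except at most one.
\begin{itemize}
\item If $g(K')<g_L$, then $S$ is not norm-minimizing after the $C_1$-surgery. Otherwise the two fillings of $C_2$, both giving $K'$ (in which $S$ is not minimal), would be two exceptional slopes. So the $C_1$-surgery slope is the exceptional one, and $S$ stays norm-minimizing in the exterior of $K\cup C_2$. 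The $C_2$-surgery slope gives $K'$, so it is the exceptional slope for $C_2$; hence $S$ is minimal for $K$, i.e.\ $g(K)=g_L>g(K')$.
\item Otherwise $g_L=g(K')$, and $S$ is a minimal genus, hence incompressible, Seifert surface for $K_{[C_1]}$.
\end{itemize}
So you never need $g(K')\le g(K)$ as a separate step.

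\emph{Second gap: the transfer step is not supplied.} You correctly identify it as the crux, but the route you suggest fails. Neither $K$ nor $K\cup C_1$ is known to be fibered at that point. Also, ``isotopic into a fiber, hence bounds a disk there'' is a non sequitur: a curve isotopic into a fiber can be essential on it. Excluding that possibility is the hard part of Kalfagianni's theorem that fibered knots admit no cosmetic generalized crossing changes.

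That theorem, not Gabai's, is what your Step~2 actually invokes. Citing it is fine, and it gives directly that $C_2$ is nugatory in $K_{[C_1]}$.

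Here is one way to finish when $g_L=g(K')$.
\begin{enumerate}
\item Let $E$ be a disk bounded by $C_2$ and missing $K_{[C_1]}$. Since $S$ is incompressible in the exterior of $K_{[C_1]}$ and $C_2\cap S=\emptyset$, you can cut and paste $E$ off $S$.
\item Work in the exterior of $K$ cut along $S$. After removing circles of $D_1\cap S$ that are inessential in $D_1\setminus K$, the disk $D_1$ cut along $D_1\cap S$ contains an annulus $A_1$ joining $C_1$ to the boundary, carrying the framing of $D_1$. In your single-arc position, this is $D_1$ cut along $\alpha_1$.
\item The annulus twist along $A_1$ that realizes the surgery on $C_1$ is a homeomorphism from the cut-open exterior of $K_{[C_1]}$ to that of $K$. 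It is the identity away from $A_1$, so it fixes $C_2$, because $D_1\cap D_2=\emptyset$.
\item The image of $E$ is a disk bounded by $C_2$ and disjoint from $K$. So $C_2$ is nugatory in $K$, and $K\cong K_{[C_2]}\cong K'$.
\end{enumerate}
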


Then we have the following result.

\begin{prop}
If $\mathcal U$ is the neighbor of a non-trivial knot $K'$ in $\Gamma_n$ for some $n\ge 2$, then $K'$ is not a fibered knot.
\end{prop}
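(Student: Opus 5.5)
The plan is to argue by contradiction and apply Theorem \ref{thm:genus2} with the unknot $\mathcal U$ in the role of $K$. Here $K'$ is the fibered target knot. The key point is that the genus of the unknot is zero, so it cannot exceed the genus of anything.

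First I unpack the hypothesis. Saying that $\mathcal U$ is a neighbor of $K'$ in $\Gamma_n$ means there is a directed edge from $\mathcal U$ to $K'$. By the definition of $\Gamma_n$, this gives $\mathcal U \xrightarrow{\ n\ } K'$ for some $n \ge 2$. The definition of neighbor also requires that $\mathcal U$ and $K'$ are not isotopic, which is consistent with $K'$ being non-trivial.

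Now suppose, for contradiction, that $K'$ is fibered. Theorem \ref{thm:genus2}, applied with $K = \mathcal U$, gives two possibilities.
\begin{enumerate}
\item $\mathcal U$ and $K'$ are isotopic. This contradicts $K'$ being non-trivial, or equivalently the neighbor condition.
\item $g(\mathcal U) > g(K')$. Since $g(\mathcal U) = 0$ and genus is always nonnegative, this is impossible.
\end{enumerate}
Both cases fail, so $K'$ cannot be fibered.

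I see no real obstacle. The only step needing care is checking that the hypotheses of Theorem \ref{thm:genus2} are met: the fibered knot must be the target of the adjacency, and $n$ must be at least $2$. Both hold here, since $K'$ is the target of the edge and $n \ge 2$ is given.
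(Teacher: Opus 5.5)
Your proposal is correct and is essentially the paper's proof: both apply Theorem~\ref{thm:genus2} with $\mathcal U$ as the source of the adjacency and the fibered knot $K'$ as the target. Both then rule out the two alternatives, the isotopy case by the non-triviality of $K'$ and the genus case because $g(\mathcal U)=0$ while genus is nonnegative.
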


\begin{proof}
Suppose on the contrary that the unknot, $\mathcal U$ is the neighbor of a fibered knot $K'$ in $\Gamma_n$ for some $n\ge 2$. By Theorem \ref{thm:genus2}, either $K'$ is trivial or $g(\mathcal U) > g(K')$. Both options produce a contradiction.
\end{proof}

\section{2-adjacency of 2-bridge knots}\label{section:2bridge}

In this section, we consider 2-adjacencies of 2-bridge knots in dialogue with questions of Torisu \cite{TorisuTwoBridge2008} and to contrast $\Gamma_2$ with $\Gamma_2^c$.

The usual way to consider a  2-bridge knot $S(p,q)$ is formed by twisting alternating strands by the list of numbers $a_1, -a_2, a_3, -a_4, \ldots, -a_{n-1}, a_n$. These numbers correspond to the numbers in a continued fraction which reduces to $\frac{p}{q}$. Here, we choose to think of a 2-bridge knot as a 3-string braid with an unusual closure, which we call the \textit{2-bridge closure}. 

\begin{defn}
Let $\beta$ be a braid whose word $w_\beta$ is expressed in the generators $\sigma_1$ and $\sigma_2$ of the braid group $B_3$ so that $\sigma_1$ is the first generator to appear in $w_\beta$. Further, let there be a zero-th vertical strand positioned to the left of the three strands in $\beta$ so that this zero-th strand does not interact with $\beta$. The \emph{2-bridge closure of $\beta$} has the following properties:
\begin{enumerate}
\item Strands 0 and 1 are connected at the top. 
\item Strands 2 and 3 are connected at the top.
\item If the final generator in $w_\beta$ is $\sigma_1$, strands 0 and 1 are connected at the bottom, and strands 2 and 3 are connected at the bottom. 
\item If the final generator in $w_\beta$ is $\sigma_2$, strands 0 and 3 are connected at the bottom, and strands 1 and 2 are connected at the bottom. 
\end{enumerate}
\end{defn}

In the braid group $B_3$, we write the braided part of the knot as the braid word $\beta=\sigma_1^{a_1}\sigma_2^{a_2} \sigma_1^{a_3}\cdots \sigma_2^{a_{n-1}}\sigma_1^{a_n}$.  Because of this  convention, we lose some of the conveniences of working in $B_n$, such as the ability to cyclically permute the generators. If the knot $K$ is the result of this closure on $\beta$ we say that $K$ \emph{corresponds} to $\beta$.  Note that it is possible for several distinct braids to correspond to the same knot type. 

The reader will notice that we have taken $n$ to be odd. When $n$ is even, the structure of a 2-bridge knot allows us to pull one crossing from the penultimate crossing to end on $\sigma_1$ as long as $|a_{n-1}|$ is at least 2. In the case that $a_{n-1}$ is equal to $\pm 1$, the structure of a 2-bridge knot allows us to add it to the penultimate crossing, increasing the number of crossings by 1.

\begin{defn}
We say the braid $\beta=\sigma_1^{a_1}\sigma_2^{a_2} \sigma_1^{a_3}\cdots \sigma_2^{a_{n-1}}\sigma_1^{a_n}$ has \emph{length} $n$.  
\end{defn}

\begin{defn}
A knot type $\mathcal{K}$ is said to be \emph{length $n$ 2-bridge} if there is a 2-bridge diagram $K$ of $\mathcal{K}$ with length $n$.  We also say $\mathcal{K}$ is of minimal length $n$.  
\end{defn}

As an example, the twist knot $T_k$ can be written as $\beta = \sigma_1^2 \sigma_2^{-k}$, so the twist knots are length-2 2-bridge knots.

Torisu \cite{TorisuNugCrossKnots} showed that the generalized crossing conjecture is true for two bridge knots, so we know that every 2-bridge knot is isolated in $\Gamma_2^c$. On the other hand, we have the following theorem.

\begin{main}
For every 2-bridge knot $K'$, there are infinitely many 2-bridge knots $K$ such that $K \xrightarrow{\ 2\ } K'$.
\end{main}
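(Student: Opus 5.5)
We build, for a given 2-bridge knot $K'$ with braid $\beta'=\sigma_1^{a_1}\sigma_2^{a_2}\cdots\sigma_2^{a_{n-1}}\sigma_1^{a_n}$ (2-bridge closure), an explicit infinite family of 2-bridge knots $K_m$ by inserting extra twist regions. Each $K_m$ carries two crossing circles $C_1,C_2$, and a generalized crossing change at $C_1$, at $C_2$, or at both returns $K'$. The guiding principle comes from the discussion after the Generalized Cosmetic Crossing Conjecture. Torisu's theorem says 2-bridge knots have no cosmetic generalized crossing changes, so in any such 2-adjacency each $C_i$ must be trivializable. That is, $C_2$ must become nugatory after the change at $C_1$, and vice versa. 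So we want each crossing change to untwist a region whose disappearance makes the other circle's twist region removable, for example by a flype or by a twist region collapsing to a nugatory configuration.

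Concretely, we append to $\beta'$ a short word such as $\sigma_2^{\,m}\sigma_1^{\,2}\sigma_2^{-m}$, or a variant placed at the beginning of the word, where closure conventions are easier to control. We take $C_1$ to encircle the $\sigma_1^{2}$ clasp and $C_2$ to encircle the two strands of one $\sigma_2^{\pm m}$ block, using an order $d=m$ (i.e.\ $q=m$) generalized change there. The changes then act as follows.
\begin{enumerate}
\item Changing $C_1$ (order 1) turns $\sigma_1^2$ into $\sigma_1^0$. The word collapses to $\sigma_2^m\sigma_2^{-m}=1$, giving $\beta'$.
\item Changing $C_2$ (order $m$) kills a $\sigma_2^{\pm m}$ block, leaving $\sigma_1^2$ adjacent to the last letters. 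By the 2-bridge closure rule, the trailing $\sigma_1^{2}$ plus a $\sigma_2^{\mp m}$ end-twist is absorbed at the closure (it is a twist at the closure, hence nugatory or undoable by isotopy), returning $K'$.
\item Changing both circles also clearly gives $K'$.
\end{enumerate}
If this particular word does not work, the fallback is a symmetric configuration: two clasps $\sigma_1^{\pm2}$ separated by a $\sigma_2^m$ twist, each clasp unclasping to a diagram isotopic to $\beta'$.

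To show the $K_m$ are pairwise distinct (infinitely many) and 2-bridge, we compute the continued fraction of $K_m$ from its braid word. $K_m=S(p_m,q_m)$ with $p_m$ growing linearly in $m$, since a twist region of length $m$ contributes linearly to the numerator. By Schubert's classification the determinants $p_m$ distinguish infinitely many of them. We must also check that each $K_m$ is a knot rather than a two-component link, which is a parity condition on $p_m$. We fix it by choosing $m$ in the right parity class, or by adjusting the clasp exponent.

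The main obstacle is step 2: verifying via the 2-bridge closure conventions that the singly-changed diagrams really are isotopic to $K'$, and not merely to a different 2-bridge knot. We would check this with continued-fraction arithmetic, requiring the fraction to equal $p'/q'$ up to the equivalences $q\equiv q'^{\pm1}\pmod p$. We would first try placing the gadget at the very top of the braid, adjacent to the closure cap of strands 0 and 1. There a $\sigma_1$ twist is absorbed by the cap, which makes the nugatory behavior transparent.
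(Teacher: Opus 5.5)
\paragraph{Verdict.} There is a genuine gap, at exactly the step you flagged (step 2). Changing the closure convention does not repair it.

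\paragraph{Why your gadget fails.} A cap of the 2-bridge closure can unwind only twisting of the two strands it joins, so it absorbs at most the last twist region of the word.

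Suppose you close $\beta'\sigma_2^m\sigma_1^2\sigma_2^{-m}$ with bottom caps $0$--$1$ and $2$--$3$. Then steps 1 and 3 work, because the leftover $\sigma_2^{\pm m}$ unwinds at the $2$--$3$ cap. But deleting the $\sigma_2^{m}$ block leaves $\beta'\sigma_1^2\sigma_2^{-m}\simeq\beta'\sigma_1^2$. Its last region $\sigma_1^{a_n+2}$ lies on strands $1,2$ and is not absorbed. For $\beta'=\sigma_1^3$ you get $T(2,5)$ instead of the trefoil.

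Putting $C_2$ around the $\sigma_2^{-m}$ block instead does not help. That circle can be slid over the $2$--$3$ cap, so it is nugatory in $K_m$, which would force $K_m\cong K'$.

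With the paper's convention (a final $\sigma_2$ means bottom caps $0$--$3$ and $1$--$2$), even step 1 fails. The $1$--$2$ cap swallows $\sigma_1^{a_n}$, so for $\beta'=\sigma_1^3$ the trefoil becomes the unknot.

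Your two-clasp fallback fails for the same reason. Also, at the top of the word it is $\sigma_2$, not $\sigma_1$, that the caps absorb: $\sigma_1$ twists strands $1$ and $2$, which lie on different caps. In terms of your own heuristic, changing $C_1$ trivializes $C_2$, but changing $C_2$ does not trivialize $C_1$.

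\paragraph{No one-ended gadget can work by absorption.} Let a tail $u$ contain a box $X$ followed by a box $Y$. Write $u_X,u_Y,u_{XY}$ for $u$ with the indicated boxes deleted. Then
\[
u=u_Y\,u_{XY}^{-1}\,u_X .
\]
The braids absorbed by a fixed closure form a subgroup, namely a stabilizer. So if all three modified tails are absorbed, then so is $u$, and $K_m\cong K'$. In other words, your plan, if it worked, would produce only $K'$ itself.

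\paragraph{The missing idea.} The two boxes must be absorbed by \emph{different} caps, one at the top and one at the bottom. The same identity, with $w_Y=\beta\sigma_2^m$, $w_{XY}=\beta$ and $w_X=\sigma_2^n\beta$, yields exactly the paper's construction:
\begin{itemize}
\item Take $\beta$ of odd length. It starts and ends in $\sigma_1$, so the closure has caps $0$--$1$ and $2$--$3$ at both ends.
\item Take the 2-bridge closure of $\beta\sigma_2^m\beta^{-1}\sigma_2^n\beta$, with $m,n$ nonzero even, and put $C_1,C_2$ around the two $\sigma_2$ boxes.
\item Deleting $\sigma_2^m$ gives $\sigma_2^n\beta$, which unwinds at the top $2$--$3$ cap.
\item Deleting $\sigma_2^n$ gives $\beta\sigma_2^m$, which unwinds at the bottom $2$--$3$ cap.
\item Deleting both gives $\beta$.
\end{itemize}
The copy of $\beta^{-1}$ is what lets one box reach the opposite cap. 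Your determinant and parity checks are the right tools for confirming that this family contains infinitely many distinct 2-bridge knots, a point the paper leaves implicit.
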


\begin{proof}
Call the knot shown in Figure \ref{fig:braid} by the name $K_\beta(m,n)$, where $m$ and $n$ are nonzero even integers. In terms of braid words in $B_3$, $K_\beta$ is the 2-bridge closure of the braid $\beta$, and $K_\beta(m,n)$ is represented by the word $\beta \sigma_2^m \beta^{-1} \sigma_2 ^n \beta$. Because we assume the 2-bridge length of $\beta$ is odd, we can assume that the word for $\beta$ ends in $\sigma_1$. 

We take the crossing circles $C_1$ and $C_2$ to each enclose one of the crossings in the box containing $m$ and $n$ crossings, respectively. Insisting that $\beta$ have odd 2-bridge length allows us to guarantee that the algebraic intersection number of $K_\beta(m,n)$ with the disk bounded by $C_i$ is zero. In terms of the braid word for $K_\beta(m,n)$, we can think of doing the crossing change on $C_1$ as deleting $\sigma_2^m$ from the braid word for $K_\beta(m,n)$. Then, both in terms of braid words and in terms of diagrams, we see that the generalized crossing change of order $m$ on $C_1$ results in the braid $\beta$. The same is true for the generalized crossing change of order $n$ on $C_2$.

If we do both generalized crossing changes, the word for $K_\beta(m,n)$ becomes $\beta$ as well, so we have a 2-adjacency from $K_\beta(m,n)$ to the 2-bridge closure of $\beta$.
\end{proof} 

\begin{center}
\begin{figure}
	\begin{tikzpicture}
		\draw[thick] (2,0) .. controls (2.25,-1) and (2.75, -1) .. (3, 0);
		\draw[thick] (2,9) .. controls (2.25,10) and (2.75, 10) .. (3, 9);
		\draw[thick] (-.5,0) .. controls (-.25,-1) and (.75,-1) .. (1,0);
		\draw[thick] (-.5,9) .. controls (-.25,10) and (.75,10) .. (1,9);
		\draw[thick] (-.5,0) -- (-.5,9);
		\draw[very thick] (0,0) rectangle (4,1);
		\draw[thick] (3,1) -- (3,2);
		\draw[thick] (2,1) -- (2,2);
		\draw[thick] (1,1) -- (1,4);
		\draw[very thick] (1.5,2) rectangle (3.5,3);
		\draw[thick] (2,3) -- (2,4);
		\draw[thick] (3,3) -- (3,4);
		\draw[very thick] (0,4) rectangle (4,5);
		\draw[thick] (1,5) -- (1,8);
		\draw[thick] (2,5) -- (2,6);
		\draw[thick] (3,5) -- (3,6);
		\draw[very thick] (1.5,6) rectangle (3.5,7);
		\draw[thick] (2,7) -- (2,8);
		\draw[thick] (3,7) -- (3,8);
		\draw[very thick] (0,8) rectangle (4,9);
		\node at (2, .5) {$\beta$}; 
		\node at (2, 4.55) {$\beta^{-1}$}; 
		\node at (2, 8.5) {$\beta$}; 
		\node at (2.5,6.5) {$m$}; 
		\node at (2.5,2.5) {$n$}; 
	\end{tikzpicture}\caption{$K'$}\label{fig:braid}
\end{figure}\hspace{6in}
\end{center}

Also, if there is a another braid that corresponds to $K$, then we have another infinite class of knots that are 2-adjacent to $K$. Additionally, Torisu \cite{TorisuTwoBridge2016} shows that there are infinitely many Montesinos knots that are 2-adjacent to any 2-bridge knot, so the 2-bridge knots have an abundance of neighbors in $\Gamma_2$. We conclude the following.

\begin{cor}\label{cor:geninfvalance}
Infinitely many vertices of $\Gamma_2$ have infinite valance.
\end{cor}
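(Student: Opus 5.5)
The plan is to read the statement off Theorem 1 (the \texttt{main} statement), interpreting ``valance'' as the in-degree of a vertex, i.e.\ the number of its neighbors in $\Gamma_2$. It then suffices to exhibit infinitely many pairwise non-isotopic knots $K'$, each with infinitely many pairwise non-isotopic neighbors. The natural choice is the class of 2-bridge knots. There are infinitely many of them: for instance, the twist knots $T_k$ have determinants $|2k+1|$ (or some comparable linear expression in $k$), so infinitely many of them are pairwise distinct. The torus knots $T(2,2j+1)$ would serve equally well.

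For each such 2-bridge knot $K'$, Theorem 1 supplies infinitely many 2-bridge knots $K$ with $K \xrightarrow{\ 2\ } K'$. Each such $K$ gives a directed edge into $K'$ in $\Gamma_2$. Only finitely many of these $K$ can be isotopic to $K'$; in fact, since they are pairwise distinct knot types, at most one can be. So $K'$ has infinitely many neighbors in the sense defined in Section \ref{section:thegraph}, and hence infinite valance. Since this holds for each of the infinitely many distinct 2-bridge vertices, the corollary follows. The result of Torisu on Montesinos knots can be cited as an alternative source of neighbors, but it is not needed.

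The main obstacle is that Theorem 1 asserts ``infinitely many'' knots, whereas its proof only produces the family $K_\beta(m,n)$. For the corollary I rely on the statement of Theorem 1 as given. If one wanted to check that these knots are genuinely distinct, I would compute a knot invariant of $K_\beta(m,n)$ as $m,n$ vary over even integers. The determinant, obtained from the continued fraction of the concatenated braid word, should grow with $|m|$ or $|n|$, and that would yield infinitely many distinct knot types.
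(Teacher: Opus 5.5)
Your proposal is correct and follows essentially the same route as the paper, which also reads the corollary directly off Theorem 1 applied to the infinitely many 2-bridge knots; the paper cites Torisu's Montesinos neighbors only as additional support, and, as you note, they are not needed. Your suggested determinant check for distinctness of the $K_\beta(m,n)$ also goes through: a tangle-fraction computation gives $\det K_\beta(m,n)=\det(K_\beta)\,|1-mn\det(K_\beta)^2|$, which is unbounded as $|mn|$ grows.
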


Using the construction from the proof of Theorem \ref{thm:main}, we have the following result.

\begin{prop}
There are infinitely many paths in $\Gamma_2$ that are arbitrarily long.
\end{prop}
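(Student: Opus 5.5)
The plan is to iterate the construction from the proof of Theorem 1. Fix a braid $\beta_0$ of odd 2-bridge length whose word ends in $\sigma_1$, for example $\beta_0=\sigma_1^{a}$ for odd $a$, or any 2-bridge knot. Choose a sequence of nonzero even integers $(m_1,n_1),(m_2,n_2),\ldots$ and define recursively
\[
\beta_{k+1}=\beta_k\,\sigma_2^{m_{k+1}}\,\beta_k^{-1}\,\sigma_2^{n_{k+1}}\,\beta_k .
\]
The proof of Theorem 1 shows that the 2-bridge closure of $\beta_{k+1}$ (the knot $K_{\beta_k}(m_{k+1},n_{k+1})$) is 2-adjacent to the 2-bridge closure of $\beta_k$. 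This gives a directed path $K_{\beta_N}\to K_{\beta_{N-1}}\to\cdots\to K_{\beta_0}$ in $\Gamma_2$ of length $N$. For this to work, the construction must be re-applicable at each stage: $\beta_{k+1}$ must again start with $\sigma_1$, end in $\sigma_1$, and have odd length. All three follow because $\beta_{k+1}$ begins and ends with a copy of $\beta_k$, and its syllable count is $3\ell_k+2$ minus cancellations at the junctions. I will check the parity directly from the word, or else note that the proof of Theorem 1 only uses that $\beta$ ends in $\sigma_1$, so that the crossing disks have algebraic intersection number zero.

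To obtain infinitely many such paths, I will vary the parameters. For a fixed $N$, different choices of $(m_1,n_1)$, or different starting knots $K_{\beta_0}$ (infinitely many 2-bridge knots), produce paths ending at distinct vertices. Letting $N$ grow gives arbitrarily long paths. The cleanest version takes infinitely many distinct base knots $K_{\beta_0}$, for instance the $(2,a)$ torus knots with $a$ odd, and builds a path of every length $N$ from each. The paths are then distinct because their terminal vertices differ.

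The main obstacle is ensuring that these really are paths, with distinct vertices and nontrivial edges, rather than walks that revisit knots or include loops. Each step must produce a knot not isotopic to the previous one. I will first try a crossing-number or determinant argument. The determinant of a 2-bridge closure is given by the continued fraction, and the twist boxes $\sigma_2^{m}$, $\sigma_2^{n}$ with $|m|,|n|$ large should increase $p$ strictly. So if I choose $|m_{k+1}|,|n_{k+1}|$ sufficiently large relative to $\beta_k$, the determinants along the path strictly increase and all vertices are distinct. If computing determinants is awkward, the fallback is the Kalfagianni--Lin genus bound or simply the crossing number of the reduced alternating diagram. The Conway-notation diagram of $\beta_{k+1}$ can be made reduced alternating by choosing the signs of $m,n$ appropriately, so its crossing number strictly exceeds that of $\beta_k$.
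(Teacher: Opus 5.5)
Your proposal is correct and is essentially the paper's argument. You iterate $\beta_{k+1}=\beta_k\sigma_2^{m_{k+1}}\beta_k^{-1}\sigma_2^{n_{k+1}}\beta_k$; no syllables merge at the junctions, so the length is exactly $3\ell_k+2$, still odd, and the word still begins and ends with $\sigma_1$. You then vary the base 2-bridge knot or the parameters to get infinitely many arbitrarily long paths.

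Your extra check that the vertices along a path are distinct is a point the paper skips, and the determinant route settles it for every choice of nonzero even parameters. A continuant computation gives $\det K_{\beta_{k+1}}=p_k\,|1-p_k^2m_{k+1}n_{k+1}|\ge 3p_k$, where $p_k=\det K_{\beta_k}$. Drop the alternating-diagram fallback, though: the $\beta_k$ and $\beta_k^{-1}$ blocks contribute continued-fraction entries of opposite sign whatever the signs of $m,n$, so no such choice makes that diagram alternating.
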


We demonstrate by constructing infinitely many infinite paths in $\Gamma_2$.

\begin{proof}
Let $K_\beta(m,n)$ be the knot shown in Figure \ref{fig:braid}, which is $2$-adjacent to $K_\beta$. Let $\beta_1$ be the braid corresponding to $K_\beta(m,n)$ with the 2-bridge closure, as described above. Because $\beta$ has odd length, the length of $\beta_1$ is also odd, and we construct $K_{\beta_1}(m_1,n_1)$, with $m_1$ and $n_1$ nonzero even integers, in the same manner that $K_\beta(m,n)$ was constructed, by taking the braid to be $$(\beta \sigma_2^m \beta^{-1} \sigma_2^n \beta) \sigma_2^{m_1} (\beta \sigma_2^m \beta^{-1} \sigma_2 ^n \beta)^{-1} \sigma_2^{n_1} (\beta \sigma_2^m \beta^{-1} \sigma_2 ^n \beta) = \beta_1 \sigma_2^{m_1} \beta_1^{-1} \sigma_2^{n_1} \beta_1.$$
As in the proof of Theorem 7, $K_{\beta_1}$ is 2-adjacent to $K_{\beta}$ with crossing changes of order $m_1$ and $n_1$. Similary, we can construct, recursively, for every integer $i\ge 1$, $\beta_{i+1} = \beta_i \sigma_2^{m_i} \beta_i^{-1} \sigma_2^{n_i} \beta_i$, where $m_i$ and $n_i$ are nonzero even integers, so that $K_{\beta_{i+1}}$ is the 2-bridge closure of $\beta_{i+1}$ and $K_{\beta_{i+1}}(m_{i+1}, n_{i+1})$ is 2-adjacent to $K_{\beta_{i}}(m_{i}, n_{i})$. This construction provides an infinitely long list of knots, each of which is 2-adjacent to the previous in the list. Hence there exists an infinitely long path in $\Gamma_2$.

Because $\beta$ represents a 2-bridge knot, of which there are infinitely many, we can construct infinitely many such paths. Indeed for a single 2-bridge knot, we make a choice for the parameters $m_i$ and $n_i$ at every step, so each 2-bridge knot is the terminus of infinitely many paths of this type.
\end{proof}

\section{Questions}
We end our mathematical content with two questions about the $n$-adjacency graph.

In Proposition \ref{prop:unknotinfvalance} and Corollary \ref{cor:geninfvalance}, we find that the unknot and all of the 2-bridge knots are infinite valance in $\Gamma_2$. One then could naturally ask what other knots have infinite valance in $\Gamma_2$.

On the other hand, Proposition \ref{prop:lonelyeveryone} tells us that all knots are isolated in $\Gamma_\infty$. Kalfagianni and Lin \cite{KalfLinGenus} prove the following theorem

\begin{theorem}\label{thm:nbounded}
If $K$ and $K'$ are non-isotopic knots, then there is a constant $C(K,K')$ so that $K \xrightarrow{\ n\ } K'$ implies $n \le C(K,K')$.
\end{theorem}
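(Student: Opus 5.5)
Theorem \ref{thm:nbounded} is a cited result of Kalfagianni and Lin, so the plan is to reconstruct their argument rather than derive it from the graph-theoretic statements above. The approach splits on the genera of $K$ and $K'$. Suppose $K \xrightarrow{\ n\ } K'$ with $K$ and $K'$ not isotopic.

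\emph{Case $g(K) > g(K')$.} This is already settled: the earlier theorem of Kalfagianni and Lin gives $n \le 6g(K)-3$, which depends only on $K$.

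\emph{Case $g(K) \le g(K')$.} Here the plan is to reverse the roles of the two knots. Let $\mathcal C = \{C_1,\ldots,C_n\}$ be the crossing circles realizing the adjacency. Performing the generalized crossing change on $C_1$ turns $K$ into $K'$. The remaining $n-1$ circles then sit inside $K'$ and give an adjacency back toward $K$. To see this, note that performing the inverse surgery on $C_1$ together with crossing changes on any subset of $C_2,\ldots,C_n$ returns the corresponding knot $K_{[C_1^{d_1}] \cup S}$ and then undoes $C_1$. So the relation is roughly symmetric up to reindexing, and one might hope to obtain $K' \xrightarrow{n-1} K$ in a suitable generalized sense. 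Then $g(K') \ge g(K)$, and if $g(K') > g(K)$, the previous case gives $n - 1 \le 6g(K')-3$.

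\emph{The main obstacle.} Two things make this hard: the symmetrization is not literally an $n$-adjacency, and the equal-genus case remains. For the equal-genus case I would follow Kalfagianni--Lin and use sutured manifold theory (Gabai). The key steps are:
\begin{itemize}
\item Take a minimal genus Seifert surface $S$ for $K$ disjoint from the $C_i$. This exists by Scharlemann--Thompson type arguments, since each crossing change preserves the genus.
\item Isotope each crossing disk so that it meets $S$ in a single arc.
\item Show that the circles $C_i$ become parallel in the complement of $S$ once $n$ exceeds a bound determined by the sutured manifold $S^3 \setminus N(S)$. That sutured manifold has only finitely many essential product-disk and annulus classes, controlled by a Haken finiteness count depending on $K$.
\item Conclude that parallel circles produce either nugatory crossings or $K \cong K'$, which is a contradiction.
\end{itemize}
Taking $C(K,K')$ to be the maximum of the bounds from the cases gives the theorem. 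The finiteness count in the third step is the delicate point. I would first try bounding the number of disjoint, pairwise non-parallel arcs on $S$ by a linear function of $g(S)$, namely $6g-3$, which is exactly the form of the earlier bound.
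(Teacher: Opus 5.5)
The paper gives no proof of this statement; it is quoted from Kalfagianni--Lin. So your outline has to stand on its own. The case $g(K)>g(K')$ is fine, but there are two genuine gaps.

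\textbf{The role reversal for $g(K)\le g(K')$ does not work.} After the change on $C_1$, also changing any nonempty subset $T\subseteq\{C_2,\ldots,C_n\}$ gives $K'$ again. Undoing $C_1$ afterwards gives $K_{[T]}\cong K'$. The only way back to $K$ is to undo $C_1$ alone. So the remaining circles only show that $K'$ carries $n-1$ circles that are nugatory or cosmetic. There is no adjacency from $K'$ to $K$ in any sense, and the case $g(K)<g(K')$ is not reduced to the first case.

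This case has to be treated directly. Use Gabai's theorem: a norm-minimizing surface disjoint from a boundary torus remains norm-minimizing under all but at most one filling of that torus. Applying it one circle at a time to the exterior of $K\cup C_1\cup\cdots\cup C_n$ shows that when $g(K)\le g(K')$, $K$ bounds a Seifert surface of genus $g(K')$ disjoint from every $C_i$. So this case looks just like the equal-genus case and inherits its difficulty.

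\textbf{In the equal-genus case, your last step is unjustified.} Once each crossing disk meets $S$ in a single arc $\alpha_i$, the circle $C_i$ is isotopic to the boundary of a thin normal disk along $\alpha_i$. So parallel arcs do give parallel circles. But for parallel $C_i,C_j$ with twist amounts $a,b$, this only yields
$K'\cong K_{[C^{a}]}\cong K_{[C^{b}]}\cong K_{[C^{a+b}]}$ for a single circle $C$. In other words, $C$ gives a generalized crossing change from $K'\cong K_{[C^a]}$ back to itself. If $C$ is nugatory in $K'$, you do get $K\cong K'$. But nothing in your argument prevents it from being cosmetic, and excluding cosmetic changes in general is the open Generalized Cosmetic Crossing Conjecture.

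Compare the case $g(K)>g(K')$. There, parallel arcs are ruled out because $-1/a$, $-1/b$, $-1/(a+b)$ would be three genus-lowering fillings of one circle, contradicting Gabai. With equal genus none of them lowers the genus, and Gabai gives nothing. Hence the $6g-3$ count bounds only the number of parallelism classes, not $n$.

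What is missing is a bound on the size of one class. A class of $k$ parallel circles gives at least $k$ distinct twist amounts $m$ with $K_{[C^m]}\cong K'$: these are the nonempty subset sums, none zero, since a zero sum would return $K$. You need to know that this can happen for only boundedly many $m$, with a bound independent of $C$, so that it depends only on $K$ and $K'$. That finiteness statement is the substantive part of the Kalfagianni--Lin theorem. Your appeal to Haken finiteness for product disks and annuli in $S^3\setminus N(S)$ does not supply it.
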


As $n$ increases, the number of neighbors of $K'$ changes, and Lemma \ref{lem:nestedgraphs} tells us that this change is non-increasing. One could explore, for different classes of knots, at what rate the change in the number of neighbors occurs. 

\section{Acknowledgments}
The authors would like to thank Allison Moore for helpful comments.

This material is based upon work supported by the National Science Foundation under Grant No. DMS-1928930 and  the National Security Agency under Grant No. H98230-23-1-0004, while the authors participated in a program hosted by the Simons Laufer Mathematical Sciences Institute (formerly MSRI) in Berkeley, California, during the summer of 2023.

\printbibliography

\end{document}